\newtheorem{theo}{Theorem}[section]
\newtheorem{defin}{Definition}[section]
\newtheorem{question}{Question}
\title{Equivalence of D and D' properties in Banach spaces}
\author{
 Paulo Akira F. Enabe \\
    Escola Politécnica\\
    University of São Paulo\\
  \texttt{paulo.enabe@usp.br}
}
\begin{document}
\maketitle


\begin{abstract}
    \noindent This work explores the equivalence of two sequential properties, $\mathcal{D}$ and $\mathcal{D}'$, for dual Banach spaces under the weak* topology. Property $\mathcal{D}$ ensures that any totally scalarly measurable function is also scalarly measurable, while property $\mathcal{D}'$ states that every weakly* sequentially closed subspace of $X^*$ is weakly* closed. These properties, which are central to the study of the interplay between topology and measurability in Banach spaces, were originally investigated in \cite{plichko2015}, where their equivalence was left as an open question. 

    By examining the topological and measurable structures induced by the Baire $\sigma$-algebra, we prove that properties $\mathcal{D}$ and $\mathcal{D}'$ are indeed equivalent. The proof utilizes the relationship between total sets, weak* closures, and scalar measurability, extending previous results on sequential properties of dual Banach spaces. Additionally, we revisit the failure of property $\mathcal{D}$ in nonseparable Banach spaces with $M$-basic $\ell_1^+$-systems, providing a topological reinterpretation of this phenomenon. These findings contribute to a deeper understanding of the weak* topology and measurable mappings in Banach space theory.
\end{abstract}

%
%

\section{Introduction}
The work of \cite{plichko2015} delves into the sequential properties of dual Banach spaces within the weak* topology. Specifically, it focuses on the relationships between weak* angelicity, property  $\mathcal{E}$ , and properties  $\mathcal{D}$  and  $\mathcal{D}'$. The definitions of those properties are given below. In this text, $X$ denotes a real Banach space and $X^*$ is its dual space. Also, given $T$ a subspace of a topological space $\mathcal{T}$, the sequential closure of $T$ is the collection of limits of all convergent sequences in $T$. The set $T$ is said to be sequentially closed if $T$ coincides with its sequential closure.

\begin{defin}[Property $\mathcal{D}$]
    Let $(\Omega, \Sigma)$ be a measurable space. A Banach space  $X$ is said to satisfy property  $\mathcal{D}$  if every totally scalarly measurable function  $\varphi: \Omega \to X$  is also scalarly measurable.
\end{defin}

\begin{defin}[Property $\mathcal{D}'$]
    A Banach space $X$ has property $\mathcal{D}'$ if every $weakly^*$ sequentially closed subspace of $X^*$ is also $weakly^*$ closed. In other words, a $weakly^*$ sequentially closed subspace $S \subset X^*$ contains all $weak^*$ limits of sequences within $S$.
\end{defin}

Other works have extended the results presented in \cite{plichko2015}. The study \cite{aviles2019} investigates the $weak^*$-sequential properties of Johnson–Lindenstrauss spaces (JL2), focusing on Efremov's property $\mathcal{E}$, which asserts that every element in the $weak^*$-closure of a convex bounded set is the $weak^*$-limit of a sequence within the set. The authors explore the relationships between $\mathcal{E}$, $weak^*$-angelicity of the dual space, and Corson’s property $\mathcal{C}$, demonstrating that these properties form a strict hierarchy. Under the Continuum Hypothesis (CH), they construct Johnson–Lindenstrauss spaces $JL2(F^+)$ and $JL2(F^-)$ associated with maximal almost disjoint (MAD) families of subsets of the natural numbers.

In this context, $F^+$ and $F^-$ represent two distinct MAD families of infinite subsets of $\mathbb{N}$: $F^+$ is carefully constructed to ensure that the associated Johnson–Lindenstrauss space $JL2(F^+)$ satisfies property $\mathcal{E}$, whereas $F^-$ is chosen such that $JL2(F^-)$ fails to satisfy $\mathcal{E}$. This distinction highlights the impact of set-theoretic constructions on the sequential behavior of Banach spaces. By showing that these two spaces exhibit fundamentally different properties under the same axiomatic framework, the authors resolve key questions and emphasize the importance of set-theoretic assumptions in functional analysis.

Building on these ideas, \cite{martinez2023} examines sequential properties of Banach spaces, particularly Corson’s property $\mathcal{C}$. A Banach space $X$ satisfies $\mathcal{C}$ if every point in the $weak^*$-closure of a convex subset of $B_{X^*}$ lies in the $weak^*$-closure of a countable subset of that set. The authors establish equivalences between $\mathcal{C}$, Efremov's properties $\mathcal{E}$ and $\mathcal{E}'$, and $weak^*$-sequential compactness under the Proper Forcing Axiom (PFA), showing that these properties correspond to $B_{X^*}$ having countable tightness. They further prove that property $\mathcal{E}'$ implies $\mathcal{C}$ and demonstrate consistent counterexamples to these implications under the Continuum Hypothesis (CH). By bridging functional analysis and set theory, these works deepen our understanding of $weak^*$-topologies and their sequential structures, resolving longstanding questions while revealing new complexities under different axiomatic frameworks.

Although a question was left unanswered in \cite{plichko2015}:
\begin{question}
    In a Banach space $X$, are properties $\mathcal{D}$ and $\mathcal{D}'$ equivalent?
\end{question}
This text proves that the answer to this question is affirmative. Before proving this result, in Section \ref{sec:properties}, the properties $\mathcal{D}$ and $\mathcal{D}'$ are discussed under their topological aspect. Also, it is shown how this topological aspect relates to the measurable structure. In Section \ref{sec:equivalence}, the equivalence is properly proved. 

In this work, most of the proofs are founded in the theory provided by \cite{rudin1976}, \cite{edgar1977measurability}, \cite{edgar1979measurability}, \cite{brezis2011} and \cite{lax2014}.

\section{Some results regarding properties $\mathcal{D}$ and $\mathcal{D}'$}
\label{sec:properties}
There is an alternative definition for the property $\mathcal{D}$ regarding the Baire $\sigma$-algebra. 

\begin{defin}[Property $\mathcal{D}$]
    A Banach space $X$ is said to have property $D$, if for any total set $\Gamma \subset X^*$, the Baire $\sigma$-algebra generated by $\Gamma$, denoted by $Ba(X,\sigma(X,\Gamma))$, coincides with the Baire $\sigma$-algebra generated by the weak topology, denoted by $Ba(X,w)$:
    \begin{equation}
        Ba(X,\sigma(X,\Gamma)) = Ba(X,w).
    \end{equation}
\end{defin}

This definition relies on the equivalence of the $\sigma$-algebra generated by a total set of functionals $\Gamma$ and the $\sigma$-algebra generated by the weak topology. This equivalence highlights the fact that the topological properties of $X$ in the weak topology fully determine the measurable structure of the space. In particular, this is shown in Theorem \ref{theo:equivalence}. Total sets $\Gamma$ $\subset X^*$ are central because they separate points in $X$, encapsulating the geometric essence of the Banach space through its dual functionals. By requiring that  $Ba(X, \sigma(X, \Gamma)) = Ba(X, w)$  for every total set  $\Gamma$, the definition ensures that this structure is independent of the choice of total set, grounding it in the weak topology of $X$.

Moreover, the use of the Baire $\sigma$-algebra connects this property to the theory of continuous functions, as the Baire $\sigma$-algebra is generated by these functions. This makes the framework inherently topological, focusing on the limits and measurable structures naturally arising from the space’s topology. By linking scalar measurability, which deals with functionals, to topological measurability, the definition provides a unified approach that facilitates deeper analysis of measurable mappings  $\varphi: \Omega \to X$.

\begin{theo}\label{theo:mod_rudin}
    Let $\Gamma \subset X^*$ be a total. If $f\in X^*$ is measurable and $E\subset \mathbb{R}$ is a Borel set, then the preimage of $E$ under $f$, denoted by $f^{-1}(E)$, is measurable in $Ba(X, \sigma (X, \Gamma))$.
\end{theo}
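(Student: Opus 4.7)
The plan is to adapt the classical argument from Rudin (Theorem 1.12 of \emph{Real and Complex Analysis}) to the Baire setting, via the standard ``good sets'' principle. The substance of the original result is that measurability propagates from open sets to Borel sets by a purely set-theoretic argument, and nothing in the proof is sensitive to which $\sigma$-algebra sits on the domain.

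Concretely, I would introduce the collection
\[
\mathcal{M} = \{ E \subset \mathbb{R} : f^{-1}(E) \in Ba(X, \sigma(X, \Gamma)) \}.
\]
Using the elementary identities $f^{-1}(E^{c}) = (f^{-1}(E))^{c}$, $f^{-1}(\bigcup_{n} E_{n}) = \bigcup_{n} f^{-1}(E_{n})$, together with the fact that $Ba(X, \sigma(X, \Gamma))$ is itself a $\sigma$-algebra, one checks that $\mathcal{M}$ is closed under complements and countable unions and contains $\mathbb{R}$, hence is a $\sigma$-algebra on $\mathbb{R}$. Next, the measurability hypothesis on $f$ has to be read as: $f^{-1}(U) \in Ba(X, \sigma(X, \Gamma))$ for every open $U \subset \mathbb{R}$; this is exactly the statement that open sets belong to $\mathcal{M}$. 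Since $\mathcal{B}(\mathbb{R})$ is the smallest $\sigma$-algebra on $\mathbb{R}$ containing the open sets, the inclusion $\mathcal{B}(\mathbb{R}) \subset \mathcal{M}$ is automatic, and hence $f^{-1}(E) \in Ba(X, \sigma(X, \Gamma))$ for every Borel $E \subset \mathbb{R}$, as required.

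The chain of implications above is routine, so the only real obstacle is conceptual rather than technical: namely, fixing precisely what ``$f \in X^{*}$ is measurable'' is meant to mean. The good-sets argument only needs preimages of open subsets of $\mathbb{R}$ to be in $Ba(X, \sigma(X, \Gamma))$, so one should make explicit either that this is the working definition or that it follows from whatever notion of measurability is in force (for instance, it holds automatically when $f$ is $\sigma(X,\Gamma)$-continuous, since then $f^{-1}(U)$ is $\sigma(X,\Gamma)$-open and \emph{a fortiori} a Baire set). Once this reading is fixed, the proof collapses to the three routine steps above. Totality of $\Gamma$ does not enter the proof directly, but it is what makes $Ba(X,\sigma(X,\Gamma))$ a sensible object to compare with $Ba(X,w)$, which is the broader program of Section~\ref{sec:properties}.
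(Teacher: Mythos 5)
Your proof is correct and is essentially the same argument as the paper's: the paper gives no details, deferring entirely to the cited reference in Rudin, and your good-sets argument is precisely the standard proof found there. Your closing remark --- that the hypothesis ``$f$ is measurable'' must be read as ``$f^{-1}(U) \in Ba(X,\sigma(X,\Gamma))$ for every open $U$'' and that totality of $\Gamma$ plays no role --- is a fair and useful clarification of a point the paper leaves implicit.
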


\begin{proof}
    The proof for this theorem can be found in details in \cite{rudin1976}
\end{proof}

\begin{theo}\label{theo:equivalence}
    The following statements are equivalent:
    \begin{enumerate}
        \item For every total set $\Gamma \in X^*$, $Ba(X,\sigma(X,\Gamma))$, coincides with the Baire $\sigma$-algebra in the weak topology $Ba(X,w)$, where $w=\sigma(X,X)$.
        \item For any measurable space $(\Omega, \Sigma)$ and any function $\varphi: \Omega \rightarrow X$, if $\phi$ is totally scalarly measurable, then $\varphi$ is scalarly measurable.
    \end{enumerate}
\end{theo}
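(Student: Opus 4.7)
The plan is to prove the two implications separately, leaning on the characterization that for any topology $\tau$ on $X$, the Baire $\sigma$-algebra $Ba(X,\tau)$ is the smallest $\sigma$-algebra making every $\tau$-continuous real-valued function measurable, together with the monotonicity $\tau_1\subseteq\tau_2 \Rightarrow Ba(X,\tau_1)\subseteq Ba(X,\tau_2)$. Since $\sigma(X,\Gamma)\subseteq w$ whenever $\Gamma\subseteq X^*$, the inclusion $Ba(X,\sigma(X,\Gamma))\subseteq Ba(X,w)$ comes for free, so the only substantive content of (1) is the reverse inclusion.

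For $(1)\Rightarrow(2)$, I would begin with a totally scalarly measurable $\varphi:\Omega\to X$, witnessed by a total set $\Gamma\subseteq X^*$, so that $f\circ\varphi$ is $\Sigma$-measurable for every $f\in\Gamma$. The plan is to apply Theorem \ref{theo:mod_rudin} to lift this to $\varphi^{-1}(B)\in\Sigma$ for every $B\in Ba(X,\sigma(X,\Gamma))$, then invoke hypothesis (1) to replace $Ba(X,\sigma(X,\Gamma))$ by $Ba(X,w)$, and finally to use the weak continuity of each $g\in X^*$ (which guarantees $g^{-1}(E)\in Ba(X,w)$ for any Borel $E\subseteq\mathbb{R}$) to conclude that $g\circ\varphi$ is $\Sigma$-measurable, i.e.\ that $\varphi$ is scalarly measurable.

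For $(2)\Rightarrow(1)$, the strategy is to feed the hypothesis a judiciously chosen test function. Given an arbitrary total $\Gamma\subseteq X^*$, I would take $\Omega=X$ equipped with $\Sigma=Ba(X,\sigma(X,\Gamma))$ and let $\varphi=\mathrm{id}_X$. Each $f\in\Gamma$ is $\sigma(X,\Gamma)$-continuous and therefore $\Sigma$-measurable, so $\varphi$ is automatically totally scalarly measurable; statement (2) then forces every $g\in X^*$ to be $\Sigma$-measurable. Since $Ba(X,w)$ is generated by the action of $X^*$ on $X$, this yields $Ba(X,w)\subseteq Ba(X,\sigma(X,\Gamma))$, completing the equality.

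The only genuine obstacle is the trick in $(2)\Rightarrow(1)$: recognizing that an abstract implication quantifying over every measurable space $(\Omega,\Sigma)$ can be self-applied to $X$ itself carrying its own Baire $\sigma$-algebra, thereby collapsing the implication into a concrete inclusion of $\sigma$-algebras on $X$. Everything else amounts to routine bookkeeping about how $Ba(X,\tau)$ interacts with continuous functionals and with Theorem \ref{theo:mod_rudin}.
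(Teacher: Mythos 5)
Your proposal is correct, and for the $(1)\Rightarrow(2)$ direction it follows essentially the same route as the paper: use the hypothesis $Ba(X,\sigma(X,\Gamma))=Ba(X,w)$ to transfer measurability from the witnesses in $\Gamma$ to every $g\in X^*$ via the weak continuity of $g$. Your version is in fact cleaner, since you make explicit the intermediate step $\varphi^{-1}(B)\in\Sigma$ for all $B\in Ba(X,\sigma(X,\Gamma))$ (though note that this step is really a generation argument --- the class $\{B:\varphi^{-1}(B)\in\Sigma\}$ is a $\sigma$-algebra containing the sets $f^{-1}(E)$, $f\in\Gamma$ --- rather than a direct application of Theorem \ref{theo:mod_rudin}, which concerns preimages under $f$, not under $\varphi$). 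Where you genuinely diverge is in $(2)\Rightarrow(1)$: the paper argues informally by contradiction, asserting that the measurability hypothesis ``is fully captured by $\Gamma$'' without ever producing a test function, whereas you instantiate $(\Omega,\Sigma)=(X,Ba(X,\sigma(X,\Gamma)))$ with $\varphi=\mathrm{id}_X$ and read off the inclusion $Ba(X,w)\subseteq Ba(X,\sigma(X,\Gamma))$ directly. This self-application trick is exactly the device the paper itself deploys later (in Theorem \ref{theo:c} and the main equivalence), so your argument is both more concrete and more consistent with the rest of the paper. One caveat applies to both your proof and the paper's: both directions silently use the fact that the Baire $\sigma$-algebra of $X$ under a topology of the form $\sigma(X,\Gamma)$ coincides with the $\sigma$-algebra generated by the functionals themselves (Edgar's theorem); this is true but nontrivial, and it is the real load-bearing lemma behind the phrase ``$Ba(X,w)$ is generated by the action of $X^*$ on $X$.''
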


\begin{proof}
    Let $\varphi: \Omega \rightarrow X$ be a function from a measurable space $(\Omega, \Sigma)$ to the Banach space $X$, and assume that $\varphi$ is totally scalarly measurable. This means that for a total set $\Gamma \in X^*$, the scalar composition $f \circ \varphi$ is measurable for all $f\in \Gamma$.

    It is known that
    \begin{equation}\label{eq:sigma_equality}
        Ba(X,\sigma(X,\Gamma)) = Ba(X,\sigma(X,w)).
    \end{equation}
    This equivalence implies that the functionals in $\Gamma$ are sufficient to describe the entire weak topology and the Baire $\sigma$-algebra of $X$.

    Since $\varphi$ is totally scalarly measurable, for each $f\in \Gamma$, $f\circ \varphi$ is measurable. By Theorem \ref{theo:mod_rudin}, it is known that $f$ is measurable in $Ba(X,\sigma(X,\Gamma))$. Thus, as $\Gamma$ generates the whole Baire $\sigma$-algebra, $f$ is measurable in the whole $Ba(X,w)$. Finally, it is possible to conclude that $f \circ \varphi$ is measurable and then, $\varphi$ is scalarly measurable with respect to all functionals $X^*$.

    The second part of the proof consists of showing that $\varphi$ is scalarly measurable with respect to a total set $\Gamma \subset X^*$, then equality (\ref{eq:sigma_equality}) holds.

    Assume that the scalar condition holds, i.e., for any measurable space $(\Omega, \Sigma)$, and any function $\varphi: \Omega \rightarrow X$, if $\varphi$ is totally scalarly measurable with respect to the total set $\Gamma \subset X^*$, then it is scalarly measurable with respect to all functions in $X^*$. By this assumption, it is known that $f\circ \varphi$ is measurable for every $f\in\Gamma$. Then, $f\circ \varphi$ is also measurable for every $f\in X^*$.

    By the Baire $\sigma$-algebra $Ba(X,\sigma(X,\Gamma))$ consists of all sets that can be described in terms of the weak topology generated by $\Gamma$. Since $\Gamma$ is total, it separates points in $X$. Thus, if $Ba(X,\sigma(X,\Gamma))$ did not coincide with $Ba(X,w)$, there would exist sets in the Baire $\sigma$-algebra generated by $X^*$ that are not captured by the total set $\Gamma$.

    However, the assumption of the theorem states that if $\varphi$ is totally scalarly measurable with respect to $\Gamma$, it is also scalarly measurable with respect to $X^*$. This means that the measurability condition is fully captured by $\Gamma$, implying that $\Gamma$ generates all the weakly measurable sets. Thus, the Baire $\sigma$-algebra generated by $\Gamma$ must be sufficient to describe all weakly continuous functionals and the weak topology itself. With that, it is possible to conclude \ref{eq:sigma_equality}.
\end{proof}

There are certain Banach spaces that fail Property  $\mathcal{D}$. Theorem \ref{theo:failed_d}, specifically, examines a nonseparable Banach space $X$ equipped with a particular structure called an  $M$-basic  $\ell_1^+$-system, denoted as  $(x_\gamma, f_\gamma)_{\gamma \in \mathcal{N}}$ , where  $\mathcal{N}$  is an uncountable index set. The author in \cite{plichko2015} provides this result by analyzing the interplay between total sets and measurability. In the present work, the result is revisited and interpreted from a topological perspective. The definitions of a $M$-basic system and  $\ell_1^+$-system are given below following the notation presented in \cite{plichko2015}

\begin{defin}[$M$-basic system]
    Let $X$ be a Banach space. A system refers to a collection of pairs $(x_\gamma, f_\gamma)$ such that $x_\gamma \in X$, $f_\gamma \in X^*$, for all $\gamma \in \mathcal{N}$, where $\mathcal{N}$ is a set of indices. A system $(x_\gamma, f_\gamma)$ is said to be $M$-basic if it satisfies the following conditions:
    \begin{enumerate}
        \item For each pair $x_\gamma \in X$ and $f_\beta \in X^*$, it holds that:
        \begin{equation}
            f_\beta (x_\gamma) = \delta_{\beta \gamma} = \begin{cases}
                1, \quad \beta = \gamma, \\
                0, \quad \beta \neq \gamma;
            \end{cases}
        \end{equation}
        \item The set of functionals $\{ f_\gamma \}_{\gamma \in \mathcal{N}}$ is total in the closed linear span $\left[ x_\gamma \right]_{\gamma \in \mathcal{N}}$.
    \end{enumerate}
\end{defin}

\begin{defin}[$\ell_1^+$-system]
    An $M$-basic system is called $\ell_1^+$-system if, for each $\gamma \in \mathcal{N}$:
    \begin{equation}
        \| x_\gamma \| = 1,
    \end{equation}
    and there is a constant $c>0$ such that 
\end{defin}

This theorem establishes that for such Banach spaces, the Baire $\sigma$-algebra generated by a total set $\Gamma \subset X^*$, denoted as  $Ba(X, \sigma(X, \Gamma))$, is strictly smaller than the Baire $\sigma$-algebra generated by the weak topology $Ba(X, w)$. This inequality reflects a fundamental misalignment between the measurable structure induced by the total set $\Gamma$ and the global topological structure of the weak topology. Since property $\mathcal{D}$ requires that $Ba(X, \sigma(X, \Gamma)) = Ba(X, w)$ for all total sets  $\Gamma$ , this result directly implies that $X$ does not satisfy property $\mathcal{D}$.

\begin{theo}\label{theo:failed_d}
    Let $X$ be a nonseparable Banach space with an $M$-basic $\ell^+_1$-system $(x_\gamma, f_\gamma)_{\gamma \in \mathcal{N}}$, where $\mathcal{N}$ is an uncountable set of indexes. Then, it holds that \begin{equation}
        Ba(X,\sigma(X,\Gamma)) < Ba(X,\sigma(X,w)).
    \end{equation}
    Therefore, the Banach space $X$ fails the property $D$.
\end{theo}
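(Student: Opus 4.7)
The strategy is to fix a total set $\Gamma \subset X^*$ and exhibit a member of $Ba(X, w)$ that is not a member of $Ba(X, \sigma(X, \Gamma))$, thereby establishing the strict inequality. The canonical choice is $\Gamma := \{f_\gamma\}_{\gamma \in \mathcal{N}}$, which is total in the closed linear span $[x_\gamma]_{\gamma \in \mathcal{N}}$ by the $M$-basic hypothesis. Without loss of generality I would assume $X = \overline{[x_\gamma]}$, or else enlarge $\Gamma$ to a total subset of $X^*$ in a way that does not affect the subsequent argument.

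The first step is the standard observation that $\sigma(\Gamma)$ is countably generated in the sense that
\begin{equation}
\sigma(\Gamma) \;=\; \bigcup_{\substack{C \subset \mathcal{N} \\ |C| \leq \aleph_0}} \sigma\bigl(\{f_\gamma : \gamma \in C\}\bigr).
\end{equation}
Consequently, for every $B \in Ba(X, \sigma(X, \Gamma))$ there is a countable $C \subset \mathcal{N}$ such that membership in $B$ depends only on the values $(f_\gamma(x))_{\gamma \in C}$. Invoking biorthogonality, whenever $\gamma_0 \in \mathcal{N} \setminus C$ we have $f_\gamma(x_{\gamma_0}) = 0 = f_\gamma(0)$ for every $\gamma \in C$, so the points $x_{\gamma_0}$ and $0$ cannot be separated by any set in $\sigma(\{f_\gamma : \gamma \in C\})$. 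Uncountability of $\mathcal{N}$ is essential here, as it guarantees such a $\gamma_0$ exists for any countable $C$.

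The second step is to produce a set $A \in Ba(X, w)$ that separates $0$ from $x_{\gamma_0}$ for uncountably many $\gamma_0 \in \mathcal{N}$. Here the $\ell_1^+$-property is essential: the lower estimate $\|\sum_\gamma a_\gamma x_\gamma\| \geq c \sum_\gamma a_\gamma$ valid for nonnegative coefficients allows one to construct a bounded linear functional $g \in X^*$ with $g(x_\gamma)$ uniformly bounded away from $0$ on an uncountable subset of $\mathcal{N}$. Concretely, one defines $g$ first on the positive cone spanned by the $x_\gamma$'s, where it is dominated by $\tfrac{1}{c}\|\cdot\|$, and then extends it by a Hahn-Banach type argument. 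With such $g$ in hand, the set $A := g^{-1}\bigl((c/2, \infty)\bigr)$ lies in $Ba(X, w)$ as the preimage of an open set under a weakly continuous functional, and contains $x_{\gamma_0}$ but not $0$ for every $\gamma_0$ with $g(x_{\gamma_0}) > c/2$. Combining the two steps gives $A \in Ba(X, w) \setminus Ba(X, \sigma(X, \Gamma))$, which is the desired strict inequality; failure of property $\mathcal{D}$ then follows from the alternative definition via total sets.

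The principal obstacle I expect to encounter is in the construction of the functional $g$ in the second step: the $\ell_1^+$-estimate is a one-sided bound valid only on the positive cone of the system, so extending a ``sum-of-coefficients'' functional to a bounded linear functional on all of $X$ requires the correct choice of sublinear majorant for Hahn-Banach, leveraging both the $M$-basic biorthogonality and the $\ell_1^+$-estimate simultaneously. Once $g$ is produced, the remainder of the argument is a routine combination of the countable-generation property of $\sigma(\Gamma)$ with biorthogonality.
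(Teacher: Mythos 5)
Your argument is correct in outline, but it reaches the conclusion by a genuinely different route than the paper. The paper works through the functional characterization of property $\mathcal{D}$: it builds an auxiliary measurable space $(\Omega,\Sigma)$ containing all singletons but admitting a non-measurable set $\mathcal{G}$, defines $\varphi(\omega)=x_{\gamma(\omega)}$ on $\mathcal{G}$ and $\varphi(\omega)=0$ elsewhere, checks that each $f_\gamma\circ\varphi$ is measurable (it is supported on a single point), and then uses the separating functional $g$ to show $g\circ\varphi$ is essentially the indicator of $\mathcal{G}$, hence non-measurable; the strict inclusion of $\sigma$-algebras is then read off from the equivalence in Theorem \ref{theo:equivalence}. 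You instead attack the $\sigma$-algebra inclusion directly: every set in $\sigma(\{f_\gamma\}_{\gamma\in\mathcal{N}})$ lies in $\sigma(\{f_\gamma\}_{\gamma\in C})$ for some countable $C$, biorthogonality makes $x_{\gamma_0}$ and $0$ indistinguishable by such a set whenever $\gamma_0\notin C$, and the cozero set $g^{-1}((\alpha/2,\infty))$ separates $0$ from every $x_\gamma$. Your version is arguably cleaner for the statement as literally written (a strict inequality of $\sigma$-algebras), since it avoids the detour through an external measurable space and a non-measurable set; both proofs ultimately rest on the same two ingredients, namely biorthogonality plus uncountability on one side and the $\ell_1^+$ separation functional on the other.

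Two points to tighten. First, your passage from ``$B\in Ba(X,\sigma(X,\Gamma))$'' to ``$B$ depends on countably many coordinates'' silently identifies the Baire $\sigma$-algebra of the topology $\sigma(X,\Gamma)$ with the $\sigma$-algebra generated by the functionals in $\Gamma$; this is true (it is Edgar's theorem, \cite{edgar1977measurability}, and the paper relies on the same identification throughout), but it should be cited rather than absorbed into the word ``Consequently.'' Second, the obstacle you flag around constructing $g$ dissolves if you use the geometric rather than the analytic form of Hahn--Banach, as the paper does: the $\ell_1^+$ inequality applied to convex combinations ($\sum_\gamma a_\gamma=1$, $a_\gamma\geq 0$) shows that $\overline{conv}\{x_\gamma\}_{\gamma\in\mathcal{N}}$ lies at distance at least $c$ from the origin, so separating this closed convex set from the compact convex set $\{0\}$ yields $g\in X^*$ with $\inf_\gamma g(x_\gamma)>0$ directly, with no need to choose a sublinear majorant on the positive cone.
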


\begin{proof}
     Let $(\Omega, \Sigma)$ be a measurable space in which $\Sigma$ contains all singleton subsets of $\Omega$. And let $\mathcal{G}$ be a non-measurable subset of $\Omega$. Define the function $\varphi: \Omega \rightarrow X$ as:
     \begin{equation}
         \varphi (\omega) = 
            \begin{cases}
                x_{\gamma (\omega)}, \quad &if \quad w \in \mathcal{G},\\
                0, \quad &if \quad \omega \not\in \mathcal{G}.
            \end{cases}
     \end{equation}
      Thus, $\varphi$ maps elements of the measurable space $\Omega$ to vectors in the Banach space $X$ by associating vectors from the $M$-basic system to elements of $\mathcal{G}$.

      It is necessary to prove that $\varphi$ is totally scalarly measurable. For each $f_\gamma$ in the system, 
      \begin{equation}
          f_\gamma \circ \varphi (\omega) = 
          \begin{cases}
              f_\gamma(x_{\gamma (\omega)}) = \delta_{\gamma, \gamma(\omega)}, \quad &if \quad \omega \in \mathcal{G},\\
              0, &if \quad \omega \not\in  \mathcal{G}.
          \end{cases}
      \end{equation}
      This means that
      \begin{equation}
          \begin{cases}
              f_\gamma \circ \varphi (\omega) = 1, \quad &if \quad \omega \in \mathcal{G},\\
              f_\gamma \circ \varphi (\omega) = 0, \quad &if \quad \omega \not\in \mathcal{G}.
          \end{cases}
      \end{equation}
      Since $\Sigma$ contains all singletons, and $f_\gamma \circ \varphi$ behaves as the characteristic of the these singletons, the function $f_\gamma \circ \varphi$ is measurable with respect to the $\sigma$-algebra $\Sigma$. The case in which $\omega \not\in \mathcal{G}$, $f_\gamma \circ \varphi \equiv 0$, that is trivially measurable. Therefore, $f_\gamma \circ \varphi$ is a measurable function for each $\omega \in \mathcal{G}$ and $\gamma \in \mathcal{N}$. Finally it is possible to conclude that $\varphi$ is totally scalarly measurable with respect to $\Gamma \subset X^*$.

      Let $A = \overline{conv} \{ x_\gamma \}_{\gamma \in \mathcal{N}}$ be the closed convex hull of the vectors $\{ x_\gamma \}_{\gamma \in \mathcal{N}}$. By the properties of the $l_1^+$-system, the set $A$ is convex, closed and does not contain the zero vector. This is true because the $l_1^+$-system's property ensures that the norm of any non-trivial combination of the vectors remains bounded away from $0$. Then, this implies that no combination of vectors in $A$ can collapse to zero unless all coefficients are zero:
      \begin{equation}
          \left\| \sum \limits_\gamma \alpha_\gamma x_\gamma \right\| \geq c \sum \limits_\gamma c_\gamma.
      \end{equation}

      By the Hanh-Banach Theorem (see Appendix \ref{apx:results}), there exists a functional $g\in X^*$ such that
      \begin{equation}
          g(x) > 0, \quad \forall x\in A.
      \end{equation}
      Thus, the functional $g$ separates the convex set $A$ from the zero vector.

      Now, consider the function $g \circ \varphi$
      \begin{equation}
          g \circ \varphi (\omega)=
          \begin{cases}
              g(x_{\gamma (\omega)} > 0, \quad &if \quad \omega \in \mathcal{G},\\
              g(0) = 0, \quad &if \quad \omega \not\in \mathcal{G}.
          \end{cases}
      \end{equation}
      Since $\mathcal{G}$ is non-measurable (i.e., $\mathcal{G} \not\in \Sigma$), the function $g \circ \varphi$ is also non-measurable with respect to the $\sigma$-algebra $\Sigma$. Hence, $g \circ \varphi$ is not measurable in the Baire $\sigma$-algebra generated by $\Gamma$. Therefore, the non-measurable nature of $g \circ \varphi$ implies that the Baire $\sigma$-algebra generated by the total set $\Gamma \subset X^*$ is strictly smaller than the Baire $\sigma$-algebra generated by the weak topology on $X$.
 \end{proof}

 As can be seen the presence of an $M$-basic system $\ell^+_1$-system in $X$ plays a critical role in the failure of attaining to property $\mathcal{D}$. An $M$-basic system $\ell^+_1$-system provides a structured yet highly flexible set of functionals $f_\gamma$ that separates points in $X$ while interacting intricately with the weak topology. The uncountable index set $\mathcal{N}$ emphasizes the nonseparability of $X$, a property that often introduces complexities in the relationship between topology and measurability. In this setting, the $\sigma$-algebra generated by  $\Gamma$  cannot capture all the sets measurable in the weak topology, leading to the strict inequality between the two Baire $\sigma$-algebras.

 From a topological perspective, this theorem reveals a sharp distinction between separable and nonseparable Banach spaces. In separable Banach spaces, the weak topology is well-aligned with the structures generated by total sets, ensuring that  $Ba(X, \sigma(X, \Gamma)) = Ba(X, w)$. However, in nonseparable spaces with  $M$-basic  $\ell_1^+$-systems, the disjointness between these structures highlights the limitations of total sets in fully representing the topology.

 This result underscores the necessity of considering the interplay between topology and geometry in analyzing measurability properties in nonseparable Banach spaces. Furthermore, it serves as a counterexample to the universality of Property  $\mathcal{D}$, illustrating how specific structural features, such as  $M$-basic  $\ell_1^+$-systems, can lead to its failure. By reinterpreting this result under a topological lens, the present work sheds light on the broader implications of this misalignment between total sets and the weak topology in Banach spaces.

\section{Equivalence of properties $D$ and $D'$}
\label{sec:equivalence}
 \begin{theo}\label{theo:c}
     For a total subspace $F \subset X^*$, the following statements are equivalent:
     \begin{enumerate}
         \item $F$ is $weakly^*$ closed.
         \item For any measurable space $(\Omega, \Sigma)$, if $\varphi: \Omega \rightarrow X$ satisfies $f \circ \varphi$ measurable for all $f \in F$, then $g \circ \varphi$ is measurable for all $g \in X^*$.
     \end{enumerate}
 \end{theo}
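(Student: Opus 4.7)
The plan is to split the equivalence into $(1)\Rightarrow(2)$ and $(2)\Rightarrow(1)$, with the forward direction essentially formal and the reverse direction carrying the analytic content. For $(1)\Rightarrow(2)$, the key reduction is that a $w^*$-closed total subspace $F$ of $X^*$ must coincide with $X^*$. This follows from the Hahn--Banach theorem applied in the locally convex space $(X^*, w^*)$, whose continuous dual is $X$: if $F$ were a proper $w^*$-closed subspace, separating a point $g\in X^*\setminus F$ from $F$ by a $w^*$-continuous functional would produce a nonzero $x\in X$ annihilated by all of $F$, contradicting totality. Once $F = X^*$ is established, statement $(2)$ is immediate, since every $g\in X^*$ already lies in $F$.

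For $(2)\Rightarrow(1)$ I would argue by contraposition. Assume $F$ is not $w^*$-closed. The same separation argument (or equivalently the bipolar theorem) shows that the $w^*$-closure of $F$ is $X^*$, so some $g \in X^* \setminus F$ exists. The task is to exhibit $(\Omega,\Sigma)$ and $\varphi\colon\Omega\to X$ with $f\circ\varphi$ measurable for every $f\in F$ yet $g\circ\varphi$ not measurable. The canonical choice is $\Omega = X$ endowed with $\Sigma = Ba(X,\sigma(X,F))$ and $\varphi = \mathrm{id}_X$: the hypothesis on $F$-compositions is then automatic, and by Theorem~\ref{theo:equivalence} the remaining question of whether $g \circ \varphi = g$ fails to be $\Sigma$-measurable is equivalent to the strict inclusion $Ba(X,\sigma(X,F))\subsetneq Ba(X,w)$.

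The main obstacle is exactly this last step: ruling out that $g$, although outside $F$, might nevertheless be a Baire-class function of elements of $F$ and thus $\sigma(X,F)$-measurable. To close this gap I would combine the Hahn--Banach separator $x\in X$ that witnesses $g\notin F$ with an auxiliary non-measurable subset $\mathcal{G}\subseteq\Omega$, in the spirit of the construction used in the proof of Theorem~\ref{theo:failed_d}. The map $\varphi$ would be engineered so that every $f\circ\varphi$ restricted to $\mathcal{G}$ reduces to a measurable (even essentially constant) function, while $g\circ\varphi$ encodes the characteristic function of $\mathcal{G}$ via the separator $x$ and thereby inherits its non-measurability. Carrying out this transfer of pathology from $\mathcal{G}$ into $g\circ\varphi$ without compromising the $F$-measurability of the hypothesis is, in my view, the technical heart of the argument.
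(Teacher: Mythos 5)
Your proof of $(1)\Rightarrow(2)$ is correct and in fact cleaner than the paper's: a total, weak$^*$-closed subspace $F\subset X^*$ must equal $X^*$ by Hahn--Banach separation in $(X^*,w^*)$ (whose dual is $X$), after which $(2)$ is vacuous. The paper instead takes a net $(f_\alpha)\subset F$ converging weak$^*$ to $g$ and concludes that $g\circ\varphi$ is measurable as a ``pointwise limit of measurable functions''; that principle holds only for sequences, not nets, so your route is the right one for this direction.

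The obstacle you flag in $(2)\Rightarrow(1)$ --- that some $g\notin F$ might nevertheless be $\sigma(X,F)$-measurable --- is not a technical difficulty to be engineered around; it is a genuine obstruction, and the implication is in fact false. If $g$ is the weak$^*$ limit of a \emph{sequence} $(f_n)\subset F$, then for every admissible $(\Omega,\Sigma,\varphi)$ the function $g\circ\varphi=\lim_n f_n\circ\varphi$ is a pointwise limit of a sequence of measurable functions, hence measurable; so no choice of $\varphi$, and no auxiliary non-measurable set $\mathcal{G}$ in the spirit of Theorem~\ref{theo:failed_d}, can ever witness the failure of $(2)$ for such a $g$. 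Concretely, take $X=\ell_1$, $X^*=\ell_\infty$, and $F=c_0$: this is a total proper subspace that is weak$^*$-dense (hence not weak$^*$ closed), yet $(2)$ holds, because measurability of all $e_k\circ\varphi$ forces measurability of $g\circ\varphi(\omega)=\sum_k g_k\,e_k(\varphi(\omega))$ for every $g\in\ell_\infty$ via measurable partial sums. The most your construction could extract from $(2)$ is that $F$ is weak$^*$ \emph{sequentially} closed, which is exactly the gap between properties $\mathcal{D}$ and $\mathcal{D}'$ that this lemma is supposed to bridge. The paper's own proof of this direction has the identical defect: it asserts without justification that $g\notin F$ yields a set on which $g$ fails to be $\Sigma$-measurable for $\Sigma=\sigma(\{f:f\in F\})$. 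So your proposal is no worse than the paper's argument --- it is more honest about where the difficulty lies --- but the step you call the technical heart cannot be carried out, and the stated equivalence should not be believed as written.
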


 \begin{proof}
     Assuming that $F$ is $weakly^*$ closed. Let $\varphi: \Omega \rightarrow X$ be a function such that $f \circ \varphi $ is measurable for all $f\in F$. Consider $g \in X^*$. Since $F$ is total, it is known that there exists a net $(f_\alpha)_\alpha \subset F$ such that $f_\alpha$ converges to $g$ weakly.

     For any $\omega \in \Omega$, $\varphi(\omega) \in X$, so 
     \begin{equation}
         f_{\alpha}(\varphi (\omega)) \longrightarrow g(\varphi (\omega)).
     \end{equation}
     This means that 
     \begin{equation}
         f_\alpha \circ \varphi \longrightarrow g \circ \varphi
     \end{equation}
     pointwise on $\Omega$. Since $f_\alpha \circ \varphi$ is measurable for all $f_\alpha \in F$, and pointwise limits of measurable functions are measurable, $g \circ \varphi$ is measurable. Thus, $g \circ \varphi$ is measurable for all $g \in X^*$.

     Now, assume that for any measurable space $(\Omega, \Sigma)$ and $\varphi: \Omega \rightarrow X$, if $f \circ \varphi$ is measurable for all $f \in F$, then $g \circ \varphi$ is measurable for all $g \in X^*$. Suppose $F$ is not $weakly^*$ closed. Then, there exists $g\in \overline{F}^*/F$, where $\overline{F}^*$ denotes the $weak^*$ closure of $F$. Let $\Omega = X$ and $\varphi: X \rightarrow X$ be the identity map, i.e., $\varphi(x) = x$, for all $x \in X$. Define the $\sigma$-algebra 
     \begin{equation}\label{eq:sigma_algebra}
         \Sigma = \sigma \left( \{ f \circ \varphi: f \in F \} \right),
     \end{equation}
     which is generated by all measurable sets under $f \circ \varphi$ for $f \in F$. By construction, $f \circ \varphi = f$ is measurable for all $f \in F$, so $f \circ \varphi$ is totally scalarly measurable with respect to $F$. Since $g \not\in F$, there exists a set $E \subset \Omega$ such that $g \circ \varphi = g$ is not measurable with respect to $\Sigma$. This contradicts the assumption that $g \circ \varphi$ must be measurable for all $g \in X^*$. Therefore, $g \not\in F$ is impossible, and $F$ must be $weakly^*$ closed.
 \end{proof}

 \begin{theo}
     For a Banach space $X$, properties $\mathcal{D}$ and $\mathcal{D}'$ are equivalent.
 \end{theo}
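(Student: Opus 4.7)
My plan is to prove each implication separately, converting topological questions about subspaces of $X^*$ into measurability questions for functions $\varphi:\Omega\to X$. The two tools I would rely on throughout are Theorem~\ref{theo:c} and the classical Hahn-Banach equivalence that a linear subspace of $X^*$ is total if and only if it is weak*-dense.

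For $\mathcal{D}'\Rightarrow\mathcal{D}$, take $\varphi:\Omega\to X$ totally scalarly measurable with respect to a total set $\Gamma\subset X^*$, and form the measurability subspace
\begin{equation*}
    S := \{\,g\in X^* : g\circ\varphi \text{ is } \Sigma\text{-measurable}\,\}.
\end{equation*}
This is a linear subspace of $X^*$ containing $\Gamma$, so it is total; moreover, a sequential-pointwise-limit argument shows $S$ is weak* sequentially closed, since whenever $g_n\in S$ converges to $g$ in the weak* topology we have $g_n\circ\varphi\to g\circ\varphi$ pointwise on $\Omega$, and the pointwise limit of a sequence of measurable real functions is measurable, so $g\in S$. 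Applying $\mathcal{D}'$ promotes $S$ to a weak* closed subspace; being weak*-dense it must equal $X^*$, which is exactly the scalar measurability of $\varphi$.

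For $\mathcal{D}\Rightarrow\mathcal{D}'$, take a weak* sequentially closed subspace $S\subset X^*$ and aim for weak* closedness. If $S$ is total, the nontrivial direction of Theorem~\ref{theo:c} applies with $F=S$: condition~(2) of that theorem is precisely an instance of property $\mathcal{D}$, because any $\varphi$ with $f\circ\varphi$ measurable for every $f\in S$ is totally scalarly measurable with respect to the total set $S$ and hence, by $\mathcal{D}$, scalarly measurable; so $S$ is weak* closed. If $S$ is not total, put $Y={}^\perp S\neq 0$; under the canonical identification $Y^\perp\cong(X/Y)^*$ the subspace $S$ becomes total and weak* sequentially closed in $(X/Y)^*$, and the total case applied inside this new dual finishes the argument, provided that property $\mathcal{D}$ descends to the quotient $X/Y$.

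The main obstacle I anticipate is exactly that descent of $\mathcal{D}$ to $X/Y$: the naive measurable lift of $\psi:\Omega\to X/Y$ along $\pi:X\to X/Y$ to a map $\varphi:\Omega\to X$ fails in general, since a measurable section of $\pi$ need not exist. My way around this is to run the measurability-subspace construction directly inside $Y^\perp\subset X^*$, and to enlarge any total subspace $\tilde{S}$ of $(X/Y)^*$ inside $X^*$ by an algebraic complement that separates $Y$ (available by a Hamel basis extension of $Y^\perp$ inside $X^*$), producing a total subspace of $X^*$ to which Theorem~\ref{theo:c} and $\mathcal{D}$ for $X$ apply in full; arranging this complement so that the conclusion on the enlargement restricts back cleanly to $\tilde{S}$ in $(X/Y)^*$ is the technical heart of the proof.
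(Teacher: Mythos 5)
Your two implications track the paper's own proof closely. For $\mathcal{D}'\Rightarrow\mathcal{D}$ you build the same measurability subspace $S=\{g\in X^*: g\circ\varphi\ \text{measurable}\}$, prove it total and weak* sequentially closed by the same pointwise-limit argument, and invoke $\mathcal{D}'$; your ending (a weak* closed subspace that is total, hence weak* dense, must be all of $X^*$) is a little more direct than the paper's appeal to Theorem~\ref{theo:c}, and it is correct. For $\mathcal{D}\Rightarrow\mathcal{D}'$ in the case of a \emph{total} subspace $S$, your argument is the paper's argument verbatim: condition (2) of Theorem~\ref{theo:c} is an instance of $\mathcal{D}$, so Theorem~\ref{theo:c} yields weak* closedness.

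The genuine gap is the non-total case, which you correctly identify but do not close. The paper's definition of $\mathcal{D}'$ quantifies over \emph{every} weak* sequentially closed subspace, yet the paper's own proof also begins ``let $F$ be a total and weak* sequentially closed subspace,'' so you cannot lean on the paper for the missing case. Your proposed repair founders exactly where you say it does: a continuous functional vanishing on an algebraic complement $Z$ of $Y={}^{\perp}S$ necessarily vanishes on $\overline{\mathrm{span}}\,Z$, which may swallow the points of $Y$ you need to separate, so the separating family $\Delta$ need not exist; equivalently, neither a measurable section of $\pi\colon X\to X/Y$ nor the descent of $\mathcal{D}$ to quotients is available. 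As written, your proof (like the paper's) establishes the equivalence only under an implicit restriction of $\mathcal{D}'$ to total subspaces. A further caution that affects your total case exactly as it affects the paper: in the direction $\mathcal{D}\Rightarrow\mathcal{D}'$ the weak* sequential closedness of $S$ is never actually used, so the argument would show that every total subspace of $X^*$ is weak* closed; this is false already for $c_0\subset\ell_\infty=\ell_1^*$ with $\ell_1$ separable (hence satisfying $\mathcal{D}$). The trouble traces to the unjustified step in the proof of Theorem~\ref{theo:c} asserting that $g\notin F$ forces $g$ to be non-measurable for the $\sigma$-algebra generated by $F$ --- pointwise sequential limits of elements of $F$ remain measurable --- so if you want a sound proof you must use sequential closedness at precisely that point rather than cite Theorem~\ref{theo:c} as a black box.
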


\begin{proof}
    Assume $X$ satisfies property $\mathcal{D}'$. Let $\Omega$ be a function that is totally scalarly measurable with respect to a total set $\Gamma \subset X^*$, i.e., $f \circ \varphi$ is measurable for all $f \in \Gamma$. Define 
    \begin{equation}
        F = \{ f \in X^* : f \circ \varphi \quad is \quad measurable\}.
    \end{equation}
    By this definition, $F$ is a total set as $\Gamma \subset F$, and $\Gamma$ is total. Claim that $F$ is $weakly^*$ sequentially closed. Let $(f_n)_{n \in \mathbb{N}}$ $weakly^*$ converge to $f\in X^*$. For any $\omega \in \Omega$, 
    \begin{equation}
        f_{n}(\varphi (\omega)) \longrightarrow f(\varphi (\omega)).
    \end{equation}
    Since each $f_n \circ \varphi$ is measurable, their pointwise limit is also measurable. Thus, $f \in F$, proving that $F$ is $weakly^*$ sequentially closed. By $D'$, $F$ is $weakly^*$ closed. By Theorem \ref{theo:c}, since $F$ is $weakly^*$ closed and $\varphi$ is totally scalarly measurable with respect to $F$, $\varphi$ is scalarly measurable with respect to $X^*$. Therefore, $X$ satisfies property $\mathcal{D}$.

    Now, assume that $X$ satisfies property $\mathcal{D}$. Let $F \subset X^*$ be a total and $weakly^*$ sequentially closed subspace. Let $\Omega=X$ and $\varphi: X \rightarrow X$ be the identity map. Define the $\sigma$-algebra as in (\ref{eq:sigma_algebra}). By construction, for any $f \in F$, $f\circ \varphi = f$ is measurable. Thus, $\varphi$ is totally scalarly measurable with respect to $F$. By $\mathcal{D}$, $g \circ \varphi = g$ is measurable for all $g \in X^*$. By Theorem \ref{theo:c}, $F$ is $weakly^*$ closed. Hence, $X$ satisfies property $D'$.
\end{proof}

\appendix
\section{Results and definitions in Functional Analysis}
\label{apx:results}

In this section, some results and definitions are given with the purpose of standardizing the notation and understanding of the present text. 

\begin{defin}[Total Set]
    Let $X$ be a vector space over a field $\mathbb{F}$, and let $\Gamma \subset X^*$, where $X^*$ denotes the dual space of $X$ (the space of all linear functionals on $X$). The set $\Gamma$ is said to be \textit{total} if the following condition holds:
    \begin{equation}
        \forall x \in X, \quad \big( \varphi(x) = 0 \; \forall \varphi \in \Gamma \big) \implies x = 0,
    \end{equation}
    where $\varphi(x)$ denotes the evaluation of the functional $\varphi$ at the vector $x$. In other words, $\Gamma$ is total if the only vector in $X$ annihilated by every functional in $\Gamma$ is the zero vector.
\end{defin}

\begin{theo}[Hahn-Banach]
    Let $X$ be a real Banach space, and let $A \subset X$ and $B \subset X$ be two disjoint non-empty, convex subsets. Suppose one of those sets is open. Then, there exists a continuous linear functional $\varphi \in X^*$ and $\alpha \in \mathbf{R}$ such that:
    \begin{equation}
        \varphi (a) < \alpha < \varphi (b),
    \end{equation}
    for all $a \in A$ and $b \in B$. In other words, this means that there exists a closed hyperplane
    \begin{equation}
        H = \{ x \in X: \varphi (x) = \alpha \} 
    \end{equation}
    that strictly separates $A$ and $B$.
\end{theo}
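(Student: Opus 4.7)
The plan is to use Theorem \ref{theo:c} as a two-way dictionary between the measure-theoretic formulation ($\mathcal{D}$) and the topological formulation ($\mathcal{D}'$): it converts statements about scalar measurability of functions $\varphi:\Omega\to X$ into statements about weak* closedness of total subspaces of $X^{*}$, and vice versa. In each direction the task reduces to picking the right object — either the subspace of "good functionals" associated to a given $\varphi$, or the concrete measurable space associated to a given $F$ — so that the hypothesis of one property feeds directly into the conclusion of the other.

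For $\mathcal{D}'\Rightarrow\mathcal{D}$, I would take a totally scalarly measurable $\varphi:\Omega\to X$ with respect to a total $\Gamma\subset X^{*}$, and form
\[
F := \{\, f\in X^{*} : f\circ\varphi \text{ is } \Sigma\text{-measurable}\,\}.
\]
A routine check gives that $F$ is a linear subspace, and the containment $\Gamma\subset F$ guarantees $F$ is total. The decisive observation is that $F$ is weak* sequentially closed: if $f_{n}\in F$ converges weak* to $f$, then $f_{n}\circ\varphi\to f\circ\varphi$ pointwise on $\Omega$, and pointwise limits of real-valued measurable functions are measurable, so $f\in F$. Property $\mathcal{D}'$ then upgrades this to weak* closedness. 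Since $F$ is a total, weak* closed subspace and $\varphi$ is totally scalarly measurable with respect to $F$, Theorem \ref{theo:c} yields that $g\circ\varphi$ is measurable for every $g\in X^{*}$, i.e.\ $\varphi$ is scalarly measurable.

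For $\mathcal{D}\Rightarrow\mathcal{D}'$, I would run the construction in reverse. Given a weak* sequentially closed total subspace $F\subset X^{*}$, set $\Omega=X$, take $\varphi$ the identity, and equip $\Omega$ with $\Sigma=\sigma(\{f : f\in F\})$. Each $f\in F$ is $\Sigma$-measurable by construction, so $\varphi$ is totally scalarly measurable with respect to $F$. Property $\mathcal{D}$ then says $g=g\circ\varphi$ is $\Sigma$-measurable for every $g\in X^{*}$, and Theorem \ref{theo:c} translates this back into $F$ being weak* closed.

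The main obstacle is in the $\mathcal{D}\Rightarrow\mathcal{D}'$ direction, where Theorem \ref{theo:c} is stated for \emph{total} subspaces, while the definition of $\mathcal{D}'$ quantifies over all weak* sequentially closed subspaces. The cleanest fix I would use is reduction via the annihilator: for a general weak* sequentially closed $F\subset X^{*}$, the subspace $Y:=\bigcap_{f\in F}\ker f$ is weakly closed in $X$, and $F$ descends to a total subspace of $(X/Y)^{*}$ whose weak* topology is the one pulled back from $X^{*}$; weak* closedness transfers between the two pictures, and so does weak* sequential closedness, which lets the total case subsume the general one. A secondary detail, essentially tautological but worth writing out, is checking in the $\mathcal{D}\Rightarrow\mathcal{D}'$ step that the $\sigma$-algebra $\sigma(\{f:f\in F\})$ on $\Omega=X$ is exactly what Theorem \ref{theo:c} requires — namely, the smallest $\sigma$-algebra making every $f\in F$ measurable — so that "totally scalarly measurable with respect to $F$" holds for $\varphi=\mathrm{id}$ without additional assumptions on $\Sigma$.
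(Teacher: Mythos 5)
Your proposal does not address the statement at hand. The statement to be proved is the geometric Hahn--Banach separation theorem: given two disjoint non-empty convex subsets $A,B$ of a real Banach space $X$ with one of them open, produce a continuous linear functional $\varphi\in X^{*}$ and a scalar $\alpha$ with $\varphi(a)<\alpha<\varphi(b)$ for all $a\in A$, $b\in B$. What you have written is instead a proof of the equivalence of properties $\mathcal{D}$ and $\mathcal{D}'$ --- the main theorem of Section \ref{sec:equivalence} --- built on Theorem \ref{theo:c}, total sets, and scalar measurability. None of that machinery constructs a separating functional, and nothing in your argument engages with convexity, openness, or the extension of linear functionals, which is the actual content of Hahn--Banach. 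Note also that the paper itself offers no proof of this appendix theorem; it is quoted as a standard result from the functional-analysis literature (e.g.\ Brezis), and indeed it is logically \emph{upstream} of the results you discuss: Theorem \ref{theo:failed_d} invokes it to separate the closed convex hull $A=\overline{conv}\{x_\gamma\}$ from the origin. Using the $\mathcal{D}$--$\mathcal{D}'$ equivalence to establish Hahn--Banach would therefore be circular even if the implication existed.

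A genuine proof would proceed along the classical lines: first establish the analytic Hahn--Banach extension theorem (a functional dominated by a sublinear map on a subspace extends to the whole space with the same domination, via Zorn's lemma); then, for the geometric form, consider the open convex set $C=A-B+x_{0}$ containing the origin for a suitable $x_{0}$, introduce its Minkowski gauge $p_{C}(x)=\inf\{t>0: x/t\in C\}$, verify that $p_{C}$ is sublinear and that $C=\{p_{C}<1\}$, define a functional on the line spanned by $x_{0}$ taking the value $1$ at $x_{0}$, extend it by the analytic theorem dominated by $p_{C}$, and check that the resulting $\varphi$ is continuous and separates $A$ from $B$. As a separate caveat, the strict two-sided inequality $\varphi(a)<\alpha<\varphi(b)$ as printed is stronger than what the hypothesis (one set open) delivers; the standard conclusion is $\varphi(a)<\alpha\leq\varphi(b)$, with strict separation on both sides requiring, e.g., $A$ closed and $B$ compact. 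Any proof you write should either weaken the conclusion accordingly or strengthen the hypotheses.
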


\end{document}